
\documentclass[leqno,10pt]{amsart}

\usepackage{amssymb,amsthm}
\usepackage{enumerate}
\usepackage{graphics,color}

\usepackage{amsmath}

\newtheorem{theorem}{Theorem}[section]
\newtheorem{corollary}[theorem]{Corollary}

\newtheorem{proposition}[theorem]{Proposition}
\theoremstyle{definition}
\newtheorem{definition}[theorem]{Definition}
\newtheorem{assumptions}[theorem]{Assumption}
\newtheorem{example}[theorem]{Example}

\usepackage[latin2]{inputenc}
\usepackage[T1]{fontenc}
\usepackage[magyar,english]{babel}
\usepackage{ae,aecompl}

\def\ee{\mathrm{e}}
\def\dd{\mathrm{d}}

\def\dt{\tfrac{\dd}{\dd t}}
\def\C{\mathrm{C}}

\def\CC{{\mathbb{C}}}
\def\RR{{\mathbb{R}}}
\def\NN{{\mathbb{N}}}

\def\calL{{\mathcal{L}}}

\renewcommand{\r}{\right}
\renewcommand{\l}{\left}

\begin{document}

\title[Convergence of Magnus method]{The norm convergence of a Magnus expansion method}

\author[A. B\'{a}tkai]{Andr\'{a}s B\'{a}tkai}
\address{Institute of Mathematics, E\"{o}tv\"{o}s Lor\'{a}nd University,   P\'{a}zm\'{a}ny P. s\'{e}t\'{a}ny 1/C, Budapest, 1117, Hungary.}
\email{batka@cs.elte.hu}
\thanks{Supported by the Alexander von Humboldt-Stiftung}

\author[E. Sikolya]{Eszter Sikolya}
\email{seszter@cs.elte.hu}

\subjclass{47D06,  65J10, 34K06}
\keywords{Magnus method, evolution family, $C_0$-semigroups}

\date\today
\begin{abstract}
We consider numerical approximation to the solution of non-autonomous evolution equations. The order of convergence of the simplest possible Magnus method will be investigated.
\end{abstract}
\maketitle
%

\section{Introduction}

The theoretical analysis of numerical methods and product formulae for partial differential equations plays an increasingly important role also in functional analysis.
In this paper we are interested in non-autonomous evolution equations of type
\begin{equation*}
\begin{cases}\tag{$\mathrm{NCP}_{s,x}$}
\dt u(t)=A(t)u(t), \quad t\geq s\in\RR, \\
u(s)=x\in X,
\end{cases}
\end{equation*}
where $X$ is a Banach space, $\big(A(t), D(A(t))\big)$ is a family of (usually unbounded) linear operators on $X$. Such kind of problems arise in many applications in quantum physics, Hamiltonian dynamics, transport problems, etc. Among the numerical methods for the approximation of the solution, the Magnus integrators play an important role, see Isereles et al. \cite{iserles-marthinsen} or Magnus \cite{magnus}. The basic idea of this method is to express the solution $u(t)$ in the form
\begin{equation*}
u(t)=\exp(\Omega(t))x,
\end{equation*}
where $\Omega(t)$ is an infinite sum yielded by the formal Picard iteration
\begin{align}
\Omega(t)=\int_0^tA(\tau)\, \dd\tau-&\frac{1}{2}\int_0^t\left[\int_0^{\tau}A(\sigma)\,\dd\sigma ,A(\tau)\right]\, \dd\tau\notag\\
+&\frac{1}{4}\int_0^t\left[\int_0^{\tau}\left[\int_0^{\sigma}A(\mu)\, \dd\mu, A(\sigma)\right]\,\dd\sigma ,A(\tau)\right]\, \dd\tau\label{omegat}\\
+&\frac{1}{12}\int_0^t\left[\int_0^{\tau}A(\sigma)\,\dd\sigma, \left[\int_0^{\tau}A(\mu)\, \dd\mu, A(\tau)\right]\right]\, \dd\tau+\cdots\notag
\end{align}
(with $s=0$), where $[U,V]=UV-VU$ denotes the commutator of the operators $U$ and $V$. Numerical methods based on this expansion are presented in \cite{iserles-munthe} by Iserles et al. or in Hochbruck and Ostermann \cite{hochbruck-ostermann}. They are of the form
\begin{equation}\label{magnus_method}
y_{n+1}=\exp(\Omega_n) y_n,
\end{equation}
to give an approximation of $y(t_{n+1})$ at $t_{n+1}=t_n+h$. Here $\Omega_n$ is a suitable approximation of $\Omega(h)$ given by \eqref{omegat} when $A(\tau)$ is substituted by $A(t_n+\tau)$.

We want to investigate the convergence of the Magnus method where the midpoint rule is used, that is, in \eqref{magnus_method}
\begin{equation*}
\Omega_n=hA\left(t_n+\frac{h}{2}\right).
\end{equation*}

In Section 2 we give a short overview on the theory of non-autonomous evolution equations. In Section 3 the norm convergence of the above Magnus method is proven, with an application to the Schr\"{o}dinger equation. Finally, in Section 4 we explore a slight generalizaton with applications to hyperbolic equations.

\section{Non-autonomous evolution equations}

In this section we summarize the main results and definitions on
\textit{non-autonomous} evolution equations and evolution semigroups needed
for our later exposition. For a detailed account and bibliographic
references see, e.g., the survey by Schnaubelt in \cite[Section
VI.9.]{Engel-Nagel}. Consider now the non-autonomous Cuachy problem
\begin{equation*}
\begin{cases}\tag{$\mathrm{NCP}_{s,x}$}
\dt u(t)=A(t)u(t), \quad t\geq s\in\RR, \\
u(s)=x\in X,
\end{cases}
\end{equation*}
where $X$ is a Banach space, $\big(A(t), D(A(t))\big)$ is a
family of (usually unbounded) linear operators on $X$.

\begin{definition} A continuous function $u : [s,\infty) \rightarrow X$ is called a \emph{(classical) solution} of ($\mathrm{NCP}_{s,x}$) if $u \in \C^1([s, \infty);X)$, $u(t) \in D(A(t))$ for all $t \ge s$, $u(s) = x$, and $\dt{u}(t) = A(t) u(t)$ for $t \ge s$.
\end{definition}

We use the following standard definition for the well-posedness of the non-autonomous Cauchy problem associated to a given family of linear operators.
\begin{definition} \label{Well-NCP}
For a family $\big(A(t),D(A(t))\big)_{t \in \RR}$ of linear operators on the Banach space $X$ \emph{the non-autonomous Cauchy problem is well-posed} (with regularity subspaces $(Y_s)_{s \in \RR}$ and exponentially bounded solutions)
if the following are true.
\begin{enumerate}[(i)]
\item \emph{(Existence)} For all $s \in \RR$ the subspace
\begin{equation*} Y_s := \Bigl\{ y \in X \; : \; \mbox{ there exists a classical solution for } (\mathrm{NCP}_{s,y})\Bigr\} \subset D(A(s))
\end{equation*}
is dense in $X$.
\item \emph{(Uniqueness)} For every $y \in Y_s$ the solution $u_s(\cdot,y)$ is unique.
\item \emph{(Continuous dependence)} The solution depends continuously on $s$ and $y$, i.e.,  if $s_n \to s \in \RR$, $y_n \to y \in Y_s$ with $y_n \in Y_{s_n}$ but in the topology of $X$, then we have
\begin{equation*}
\| \hat{u}_{s_n}(t,y_n) - \hat{u}_s(t,y) \| \to 0
\end{equation*}
uniformly for $t$ in compact subsets of $\RR$, where
\begin{equation*}
\hat{u}_r(t,y) :=
\begin{cases}
 u_r(t,y) &\mbox{if } r \leq t, \\
y &\mbox{if } r > t.
\end{cases}
\end{equation*}
\item \emph{(Exponential boundedness)} There exist constants $M \ge 1$ and $\omega \in \RR$
such that
\begin{equation*}
\| u_s(t, y)\| \leq M \ee^{\omega (t-s)} \| y \|
\end{equation*}
for all $y \in Y_s$ and $t\ge s$.
\end{enumerate}
\end{definition}

\noindent As in the autonomous case, the operator family solving a non-autonomous Cauchy problem enjoys certain algebraic properties.

\begin{definition}\label{evf}
A family $U=(U(t,s))_{t \ge s}$ of linear, bounded operators on a Banach space $X$ is called an (exponentially bounded)
\emph{evolution family} if
\begin{enumerate}
\item[(i)] $U(t,r)U(r,s) = U(t,s), \quad U(t,t) = I$ \quad holds for all $t \ge r \ge s \in \RR$,
\item[(ii)] the mapping $(t,s) \mapsto U(t,s)$ is strongly continuous,
\item[(iii)] $\| U(t,s) \| \leq M \ee^{\omega (t-s)}$ for some $M \ge 1, \omega \in \RR$ and all $t \ge s \in \RR$.
\end{enumerate}
\end{definition}

In general, however, and in contrast to the behavior of $C_0$-semigroups (i.e., the autonomous case), the algebraic
properties of an evolution family do not imply any differentiability on a dense subspace. So we have to add some differentiability
assumptions in order to solve a non-autonomous Cauchy problem (abbreviated later on as NCP) by an evolution family.
\begin{definition}\label{Well-evf}
An evolution family $U=(U(t,s))_{t \ge s}$ is called \emph{evolution family solving NCP} if for every $s \in \RR$ the
regularity subspace
\begin{equation*}
Y_s := \Bigl\{ y \in X \; : \; [s, \infty) \ni t \mapsto U(t,s)y \mbox{ solves }  (\mathrm{NCP}_{s,y}) \Bigr\}
\end{equation*}
is dense in $X$.
\end{definition}

In this case, the unique classical solution of ($\mathrm{NCP}_{s,x}$) is given by $u(t):=U(t,s)x$.
The well-posedness of NCP can now be characterized by
the existence of a solving evolution family, see \cite[Proposition VI.9.3]{Engel-Nagel}.

\begin{proposition}
Let $X$ be a Banach space, and let $\big(A(t),D(A(t))\big)_{t \in \RR}$ be a family of linear operators on $X$. The following assertions are equivalent.
\begin{enumerate}
\item[(i)]  The associated non-autonomous Cauchy problem is well-posed.
\item[(ii)] There exists a unique evolution family
  $(U(t,s))_{t \ge s}$ solving NCP.
\end{enumerate}
\end{proposition}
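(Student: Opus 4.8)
The plan is to establish the two implications separately, using the uniqueness of classical solutions as the bridge between the analytic notion (well-posedness) and the algebraic one (evolution family).

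For (i) $\Rightarrow$ (ii) I would read the evolution family off from the solutions: for $t \ge s$ and $y \in Y_s$ set $U(t,s)y := u_s(t,y)$, and $U(t,t) := I$. Uniqueness makes $y \mapsto u_s(t,y)$ linear on the subspace $Y_s$, and the exponential bound of Definition \ref{Well-NCP}(iv) gives $\|U(t,s)y\| \le M\ee^{\omega(t-s)}\|y\|$ there, so $U(t,s)$ extends uniquely to an operator in $\calL(X)$ with the same bound; this is already property (iii) of Definition \ref{evf}. The cocycle law is the first substantial point: for $y \in Y_s$ and $s \le r \le t$, the restriction of $u_s(\cdot,y)$ to $[r,\infty)$ is a classical solution of $(\mathrm{NCP}_{r,z})$ with $z := u_s(r,y)$, hence $z \in Y_r$, and uniqueness forces $u_r(t,z) = u_s(t,y)$, i.e.\ $U(t,r)U(r,s) = U(t,s)$ on $Y_s$; density and boundedness then carry this identity to all of $X$. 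Strong continuity (property (ii)) holds on $Y_s$ by the continuous-dependence axiom --- stated precisely through the $\hat{u}$ convention so as to control $(t,s)$ jointly, including across the diagonal --- and extends to $X$ via the local uniform bound and a standard $3\varepsilon$ argument. Finally, $U$ solves NCP in the sense of Definition \ref{Well-evf} because its regularity subspace contains the dense $Y_s$, and any second solving family $V$ must coincide with $U$: on its own dense regularity subspace it produces classical solutions of the same problems, equal to $u_s(\cdot,y)$ by uniqueness, whence $V(t,s) = U(t,s)$ by density.

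For the converse (ii) $\Rightarrow$ (i) I would use the same dictionary in reverse, setting $u_s(t,y) := U(t,s)y$ for $y$ in the regularity subspace $Y_s$ of $U$. Existence is then the defining density property of Definition \ref{Well-evf}, the exponential bound is property (iii) of Definition \ref{evf}, and continuous dependence follows from strong continuity of $U$ together with the local bound.

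The genuinely delicate point --- and the step I expect to be the main obstacle --- is the uniqueness axiom (ii) of well-posedness in this direction. Definition \ref{Well-evf} provides only forward differentiability, $\dt U(t,s)y = A(t)U(t,s)y$, and not the backward identity $\frac{\partial}{\partial r}U(t,r)y = -U(t,r)A(r)y$ on which the standard uniqueness argument relies: if $v$ solves $(\mathrm{NCP}_{s,y})$ one wants $r \mapsto U(t,r)v(r)$ to be constant on $[s,t]$, which would give $v(t) = U(t,s)y$. My plan is either to establish this backward differentiability directly on the regularity subspaces, or, more robustly, to lift the problem to the associated evolution semigroup $(T(\tau))_{\tau \ge 0}$ on $C_0(\RR;X)$ given by $(T(\tau)f)(s) = U(s,s-\tau)f(s-\tau)$, whose $C_0$-property is equivalent to strong continuity of $U$ and whose generator encodes both $A(\cdot)$ and the uniqueness statement; the cited characterization then closes the gap, and the remaining verifications reduce to bookkeeping with the four axioms.
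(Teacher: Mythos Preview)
The paper does not give its own proof of this proposition; it merely cites \cite[Proposition VI.9.3]{Engel-Nagel} and states the result. So there is nothing in the paper to compare your argument against.

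That said, your outline is the standard route (and essentially the one taken in Engel--Nagel): build $U(t,s)$ from the solutions $u_s(\cdot,y)$ on $Y_s$, extend by density using the exponential bound, get the cocycle identity from uniqueness of solutions, and strong continuity from the continuous-dependence axiom. You have also correctly isolated the only nontrivial point in the converse direction, namely that Definition~\ref{Well-evf} gives forward but not backward differentiability, so the usual ``freeze $r\mapsto U(t,r)v(r)$'' argument for uniqueness is not immediately available. Passing to the evolution semigroup on $C_0(\RR;X)$, as you propose, is exactly the device used in the Engel--Nagel treatment to close this gap; the alternative of proving backward differentiability directly on the regularity subspaces is in general not feasible without further hypotheses. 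Your sketch is therefore sound, with the caveat that the evolution-semigroup step is not mere bookkeeping but the actual content of the proof in that direction.
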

We say in this case that the evolution family is \emph{generated} by the operators $\big(A(t),D(A(t))\big)_{t \in \RR}$.

Unfortunately, the well-posedness of non-autonomous evolution equations is a complicated issue and there is no general theory describing it. Conditions implying well-posedness are generally divided into
assumptions of \emph{``parabolic''} and of \emph{``hyperbolic''}
type. Roughly speaking, the main difference between these two types
is that in the parabolic case we assume all $A(t)$ being generators
of analytic semigroups, while in the hyperbolic case we assume the
stability for certain products instead. In both cases one has to add
some continuity assumption on the mapping $t \mapsto A(t)$.
We mention only a typical and quite simple version for each type.

\begin{assumptions}[\emph{Parabolic case}] \label{asu-para} \label{A(inh-para)}
\rule{0pt}{0pt}
\begin{enumerate}[(P1)]
\item \label{P1} The domain $D:= D(A(t))$ is dense in $X$
and is independent of $t \in \RR$.
\item\label{P2} For each $t \in \RR$ the operator $A(t)$ is
the generator of an analytic semigroup $\ee^{\cdot A(t)}$. For all
$t \in \RR$, the resolvent $R(\lambda, A(t))$ exists for all
$\lambda \in \CC$ with $\Re \lambda \ge 0$ and there is a constant
$M \ge1$ such that
\begin{equation*} \| R(\lambda, A(t)) \| \leq \frac{M}{|\lambda| +1}
\end{equation*}
for $\Re \, \lambda \ge 0$, $t \in \RR$. The semigroups $\ee^{\cdot
A(t)}$ satisfy $\|\ee^{s A(t)}\| \leq M\ee^{\omega s}$
for absolute constants $\omega < 0$ and $M \ge 1$.
\item \label{P3} There exist constants $L \ge 0$ and
$0 < \alpha \leq 1$ such that
\begin{equation*}
\| (A(t) - A(s))A(0)^{-1} \| \leq L |t-s|^{\alpha} \mbox{ for all }
t,s \in \RR.
\end{equation*}
\end{enumerate}
\end{assumptions}
\begin{assumptions}[\emph{Hyperbolic case}] \label{asu-hyp}
\rule{0pt}{0pt}
\begin{enumerate}[(H1)]
\item The family $(A(t))_{t \in \RR}$ is \textit{stable},
i.e., all operators $A(t)$ are generators of $C_0$-semi\-groups and
there exist constants $M \ge 1$ and $\omega \in \RR$ such that
\begin{equation*} (\omega, \infty) \subset \rho(A(t)) \quad \mbox{for all } t \in \RR
\end{equation*}
and
\begin{equation*} \label{stab_hyp}
\Bigl\| \prod_{j=1}^{k} R(\lambda, A(t_j)) \Bigr\| \leq M (\lambda
-\omega)^{-k} \quad \mbox{for all } \lambda > \omega
\end{equation*}
and every finite sequence $- \infty < t_1 \leq t_2 \leq \dots \leq
t_k < \infty$, $k\in \NN$.
\item There exists a densely embedded subspace
$Y \hookrightarrow X$, which is a core for every $A(t)$ such that
the family of the parts $(A_{|Y}(t))_{t \in \RR}$ in $Y$ is a stable
family on the space $Y$.
\item The mapping
$\RR \ni t \mapsto A(t) \in {\mathcal L}(Y,X)$ is uniformly
continuous.
\end{enumerate}
\end{assumptions}

Let us close this summary by recalling an important basic perturbation result, see Engel and Nagel \cite[Theorem VI.9.19]{Engel-Nagel}.

\begin{theorem}\label{thm:var_const}
Let $(U(t,s))_{t \ge s}$ be an evolution family in the Banach space $X$. Let $B(t)$, $t\in\RR$ be closed operators such that $U(t,s)X\subset D(B(t))$, $t\mapsto B(t)U(t,s)$ is strongly continuous and $\|B(t)U(t,s)\|\leq k(t-s)$ for $t>s$ and some locally integrable function $k$. Then there is a unique evolution family $(U_B(t,s))_{t \ge s}$ such that
\begin{equation}\label{eq:variationof_const1}
U_B(t,s)x = U(t,s)x + \int_s^t U_B(t,r)B(r)U(r,s)x \dd r
\end{equation}
for all $x\in X$ and $t>s$.
\end{theorem}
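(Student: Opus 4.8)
The plan is to construct $U_B$ by a Dyson--Phillips type series and to read off every required property from its convergence. Set $U_0(t,s):=U(t,s)$ and, recursively,
\[
U_{n+1}(t,s)x:=\int_s^t U_n(t,r)B(r)U(r,s)x\,\dd r,\qquad t\ge s,\ x\in X.
\]
Each integrand is continuous in $r$: the factor $B(r)U(r,s)x$ is continuous by hypothesis, while $U_n(t,\cdot)$ is strongly continuous and locally bounded (an easy induction), so the composition is continuous; since $\|U_n(t,r)B(r)U(r,s)\|\le\|U_n(t,r)\|\,k(r-s)$ is locally integrable, the Bochner integral exists and $(t,s)\mapsto U_{n+1}(t,s)$ is again strongly continuous. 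Formally summing the recursion and matching \eqref{eq:variationof_const1} shows that, once convergence is secured, $U_B:=\sum_{n\ge0}U_n$ solves the integral equation, with $U_B(t,t)=I$ coming from the single surviving term $U_0$.

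The heart of the matter is the convergence of this series. Writing $U_n$ as an iterated integral over $s\le r_1\le\cdots\le r_n\le t$ and bounding the leading factor by $\|U(t,r_n)\|\le M\ee^{\omega(t-r_n)}$ and each block by $\|B(r_j)U(r_j,r_{j-1})\|\le k(r_j-r_{j-1})$ (with $r_0:=s$), I get, with $\omega_+:=\max\{\omega,0\}$,
\[
\|U_n(t,s)\|\le M\ee^{\omega_+(t-s)}\,J_n(t-s),\qquad J_n(\tau):=\int_{\substack{\sigma_j\ge0\\ \sigma_1+\cdots+\sigma_n\le\tau}}\prod_{j=1}^n k(\sigma_j)\,\dd\sigma.
\]
The functions $J_n=k^{*n}*1$ obey the renewal recursion $J_n=k*J_{n-1}$, $J_0\equiv1$, so $\sum_n J_n$ is controlled by the resolvent kernel $\sum_n k^{*n}$ of the Volterra convolution $f\mapsto k*f$. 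The main obstacle lies exactly here: because $k$ is only locally integrable (and genuinely singular in the parabolic case), the naive bound $J_n(\tau)\le(\int_0^\tau k)^n/n!$ need not hold, and one must instead use that this Volterra operator is quasinilpotent on $L^1_{\mathrm{loc}}$ --- equivalently, subdivide $[s,t]$ into finitely many subintervals on each of which $\int k<1$ and iterate the resulting geometric estimate. This gives local uniform convergence of $\sum_n\|U_n\|$, hence operator-norm convergence of $U_B$, so that $U_B(t,s)$ is bounded and $(t,s)\mapsto U_B(t,s)$ is strongly continuous as a locally uniform limit of strongly continuous maps. The bound $\|U_B(t,s)\|\le M\ee^{\omega_+(t-s)}v(t-s)$, with $v$ solving $v=1+k*v$, yields the exponential estimate of Definition \ref{evf}(iii); here one uses that $k$ may be taken at most exponential, since $\|B(t)U(t,s)\|\le\|B(t)U(t,t-1)\|\,\|U(t-1,s)\|$ together with the exponential bound on $U$.

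It remains to verify the algebraic law $U_B(t,r)U_B(r,s)=U_B(t,s)$ and uniqueness, and both reduce to the same vanishing principle for homogeneous Volterra equations. For the cocycle identity, splitting the defining integral for $U_B(t,s)$ at the intermediate time $r$ and using $U(\cdot,s)=U(\cdot,r)U(r,s)$, then subtracting the equation satisfied by $U_B(t,r)U_B(r,s)$, leads to
\[
\Phi(t,a):=U_B(t,a)-U_B(t,r)U_B(r,a)=\int_a^r\Phi(t,\sigma)\,B(\sigma)U(\sigma,a)\,\dd\sigma\qquad(a\le r),
\]
a homogeneous Volterra equation in $a$ with $\Phi(t,r)=0$; iterating gives $\|\Phi(t,a)\|\le M_0 J_n(r-a)\to0$, so $\Phi\equiv0$, which is the cocycle property. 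Uniqueness is identical: the difference $D$ of two solutions of \eqref{eq:variationof_const1} satisfies $D(t,s)=\int_s^t D(t,r)B(r)U(r,s)\,\dd r$, whence $D\equiv0$ by the same estimate.

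Thus the only genuinely delicate point is the Volterra convergence/quasinilpotency estimate under mere local integrability of $k$; once that is available, existence, strong continuity, exponential boundedness, the cocycle law and uniqueness all follow. Conceptually this is the Miyadera--Voigt perturbation of the generator of the evolution semigroup associated with $U$, the hypothesis $\|B(t)U(t,s)\|\le k(t-s)$ being precisely the Miyadera condition, and that viewpoint makes the semigroup law (hence the cocycle identity) automatic.
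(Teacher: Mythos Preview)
The paper does not actually prove this theorem: it is quoted from Engel and Nagel \cite[Theorem VI.9.19]{Engel-Nagel} as background material, with no argument given. So there is no ``paper's own proof'' to compare against.

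That said, your Dyson--Phillips construction is precisely the standard route to this result (and is, in outline, the argument behind the cited reference). Defining $U_0=U$, $U_{n+1}(t,s)=\int_s^t U_n(t,r)B(r)U(r,s)\,\dd r$ and summing is exactly how the perturbed family is built; the iterated-integral bound in terms of the convolution powers $k^{*n}$ is correct, and you are right to flag the quasinilpotency of the Volterra operator $f\mapsto k*f$ on $L^1_{\mathrm{loc}}$ as the crux when $k$ is merely locally integrable. The subdivision trick (choosing intervals on which $\int k<1$) is an acceptable way to secure this; alternatively one can exponentially weight and invoke the standard renewal-equation estimate. Your treatment of the cocycle identity and of uniqueness via the homogeneous Volterra equation is also the usual one. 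The closing remark linking the hypothesis to a Miyadera--Voigt condition for the associated evolution semigroup is apt and is indeed another way to package the whole argument.

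One small point: in the strong-continuity claim for the integrand you should be slightly more careful that $U_n(t,\cdot)$ is strongly continuous \emph{and} locally bounded uniformly in the relevant variable before composing with the merely strongly continuous $r\mapsto B(r)U(r,s)x$; this is routine but deserves a word in a fully written-out proof. Otherwise the proposal is sound.
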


Note that this result only states the existence of an evolution family satisfying a variations-of constants formula, but not the well-posedness of non-autononous Cauchy problem associated to the operators $(A(t)+B(t))_{t\in\RR}$.

\section{Norm convergence of the Magnus method}

We would like to investigate the convergence of the following, simplest possible Magnus method introduced in the Introduction, where we take the first term of the Magnus series expansion and evaluate the integral using the midpoint rule. Let us take a time step $h>0$, and consider the following iteration:
\begin{equation}\label{eq:magnus_split}
y_{n+1}=\ee^{\Omega_n} y_n,\qquad \Omega_n = h A\left(t_n+\tfrac{h}{2}\right),
\end{equation}
starting with $t_0=s$, $y_0=x$ and then $t_{n+1}=t_n+h$. Here and later on we will use the exponential notation to denote strongly continuous semigroups.

\begin{assumptions}\label{ass:magnus_norm}
Assume that
\begin{enumerate}[(a)]
\item  there exists $K\geq 1$, $\omega\in\RR$ such that $\|\ee^{tA(r)}\|\leq K\ee^{\omega t}$ for all $r,t\in \RR$. Further, $A(t)=A+V(t)$, where $A$ is the generator of a strongly continuous semigroup and $V(t)\in \calL(X)$ for all $t\in\RR$,
\item (\emph{Well-posedness}) the non-autonomous Cauchy problem is well-posed, i.e., there is an evolution family $U(t,s)$ which solves $(\text{NCP}_{s,x})$ and satisfies $\|U(t,s)\|\leq K\ee^{\omega(t-s)}$.

\item (\emph{Stability}) the Magnus method \eqref{eq:magnus_split} is stable, i.e., there are constants $M\geq 1$, $\omega\in \RR$ such that for all $n\in\NN$
\begin{equation*}
\left\| \prod_{j=0}^k \ee^{\tfrac{t-s}{n}A\left(s+\tfrac{(2j+1)(t-s)}{2n}\right)} \right\| \leq M \ee^{\tfrac{(k+1)\omega(t-s)}{n}}
\end{equation*}
for all $k=0,1\ldots,n-1$, and
\item (\emph{Local H\"{o}lder continuity}) there exists $\alpha\in (0,1]$ and for all $K>0$ there is $L=L(K)>0$ such that
\begin{equation*}
\left\| A(t)-A(s)\right\| \leq L|t-s|^{\alpha}
\end{equation*}
for all $t,s\in (-K,K)$.
\end{enumerate}
\end{assumptions}

Here and later on, for bounded linear operators $L_k\in \calL(X)$,
$$\prod_{k=0}^{n-1}L_k:=L_{n-1} L_{n-2}\cdots L_0$$
denotes the ``time-ordered product''. If the product is empty, we define it as the identity operator $I$ on $X$.

Note that the last condition (d) is always satisfied with $\alpha=1$ if we assume continuous differentiability of the map $t\mapsto V(t)$. Note also that the boundedness condition in (a) is always satisfied if the map $t\mapsto V(t)$ is bounded.

\begin{theorem}\label{thm:magnus conv}
Assume that  the conditions in Assumption \ref{ass:magnus_norm} are satisfied. Then the Magnus method defined by \eqref{eq:magnus_split} converges in the operator norm and has a uniform convergence order $\alpha\in (0,1]$, i.e., for all $t,s\in \RR$, $t>s$ there is $C\geq 1$ such that
\begin{equation*}
\left\| U(t,s)-\prod_{k=0}^{n-1}\ee^{\frac{t-s}{n} A(s+(2k+1)\frac{t-s}{2n})}\right\| \leq \frac{C}{n^\alpha}
\end{equation*}
for all $n\in\NN$.
\end{theorem}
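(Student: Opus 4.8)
The plan is to run a telescoping (``Lady Windermere's fan'') comparison between the exact evolution family and the Magnus product, reducing the global error to a sum of local errors, and then to estimate each local error by a Duhamel/variation-of-constants formula. Throughout write $h=\tfrac{t-s}{n}$, $t_k=s+kh$, and $\tau_k=t_k+\tfrac h2=s+(2k+1)\tfrac h2$ for the midpoints; set $E_k:=\ee^{hA(\tau_k)}$, $U_k:=U(t_{k+1},t_k)$, and $P_n:=\prod_{k=0}^{n-1}E_k=E_{n-1}\cdots E_0$. By the evolution property (Definition \ref{evf}(i)) one has $U(t,s)=U_{n-1}\cdots U_0$.

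First I would telescope so as to expose \emph{head} products of the $E_j$ (which are exactly what the stability Assumption \ref{ass:magnus_norm}(c) controls) together with a trailing piece of the exact family. Interpolating factor by factor between $U(t,s)$ and $P_n$ gives
\[
U(t,s)-P_n=\sum_{k=0}^{n-1}U(t,t_{k+1})\,\bigl(U_k-E_k\bigr)\Bigl(\prod_{j=0}^{k-1}E_j\Bigr).
\]
Taking norms and using Assumption \ref{ass:magnus_norm}(b) for $\|U(t,t_{k+1})\|\le K\ee^{\omega(t-t_{k+1})}$ and Assumption \ref{ass:magnus_norm}(c) with $l=k-1$ for $\|\prod_{j=0}^{k-1}E_j\|\le M\ee^{k\omega h}$, the two exponential weights combine to the $k$-independent factor $\ee^{\omega(n-1)h}\le\ee^{|\omega|(t-s)}$. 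Hence the global error is bounded by $KM\ee^{|\omega|(t-s)}\sum_{k=0}^{n-1}\|U_k-E_k\|$, and it remains to estimate the local error $\|U_k-E_k\|$.

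For the local error I would compare $U_k=U(t_{k+1},t_k)$ with the frozen semigroup $S_k(\sigma):=\ee^{\sigma A(\tau_k)}$ on the step $[t_k,t_{k+1}]$. Since $A(r)=A(\tau_k)+\bigl(V(r)-V(\tau_k)\bigr)$ with a \emph{bounded} difference $V(r)-V(\tau_k)=A(r)-A(\tau_k)$, the two families share the unbounded part and differ only by a bounded, time-dependent perturbation; Duhamel's formula (equivalently Theorem \ref{thm:var_const} applied to the base family $S_k$ with perturbation $B(r)=V(r)-V(\tau_k)$) yields
\[
U(t_{k+1},t_k)-S_k(h)=\int_{t_k}^{t_{k+1}}U(t_{k+1},r)\,\bigl(V(r)-V(\tau_k)\bigr)\,S_k(r-t_k)\,\dd r.
\]
Now the midpoint choice pays off: for $r\in[t_k,t_{k+1}]$ we have $|r-\tau_k|\le\tfrac h2$, so local H\"older continuity (Assumption \ref{ass:magnus_norm}(d)) gives $\|V(r)-V(\tau_k)\|=\|A(r)-A(\tau_k)\|\le L\,(h/2)^\alpha$ with $L=L(\max(|s|,|t|))$. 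Bounding $\|U(t_{k+1},r)\|$ and $\|S_k(r-t_k)\|$ by $K\ee^{\omega\cdot}$ (Assumptions \ref{ass:magnus_norm}(b) and \ref{ass:magnus_norm}(a)) and integrating over the interval of length $h$ produces $\|U_k-E_k\|\le K^2L\,2^{-\alpha}\ee^{|\omega|(t-s)}\,h^{1+\alpha}=:C'h^{1+\alpha}$, uniformly in $k$ and $n$.

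Combining the two estimates, the global error is at most $KM\ee^{|\omega|(t-s)}\cdot n\cdot C'h^{1+\alpha}=KMC'\ee^{|\omega|(t-s)}\,(t-s)^{1+\alpha}\,n^{-\alpha}$, which is the claimed bound with $C:=KMC'\ee^{|\omega|(t-s)}(t-s)^{1+\alpha}$, independent of $n$. The routine parts are the telescoping algebra and the summation; the one step that needs genuine care is the rigorous justification of the Duhamel identity, since a priori we only know that $U$ solves the NCP on its regularity subspaces and that Theorem \ref{thm:var_const} produces \emph{an} evolution family satisfying the integral equation. I would bridge this either by verifying that the given solving family $U$ itself satisfies the same variation-of-constants equation, so that the uniqueness clause of Theorem \ref{thm:var_const} identifies it with the perturbed family, or, more directly, by differentiating $r\mapsto U(t_{k+1},r)S_k(r-t_k)y$ for $y$ in a suitable regularity subspace (using the backward derivative $\partial_r U(t,r)y=-U(t,r)A(r)y$) and extending to all of $X$ by density and the uniform bounds. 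This identification of the exact family with the object satisfying Duhamel's formula is the main obstacle; everything downstream is estimation.
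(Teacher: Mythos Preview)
Your proof is correct and follows essentially the same approach as the paper: a local error estimate via the variation-of-constants formula (Theorem \ref{thm:var_const}) applied to the decomposition $A(r)=A(\tau_k)+\bigl(V(r)-V(\tau_k)\bigr)$, followed by the standard telescoping sum controlled by the stability assumption. The paper presents the two steps in the opposite order and uses the slightly cruder bound $\|B(r)\|\le Lh^\alpha$ instead of your $L(h/2)^\alpha$, but the structure is identical; your explicit discussion of how to identify the given solving family $U$ with the one produced by Theorem \ref{thm:var_const} is a point the paper passes over silently.
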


\begin{proof}
First note that  $A(t)=A(s+\tfrac{h}{2})+B(t)$ with $B(t)=V(t)-V(s+\tfrac{h}{2})\in \calL(X)$. Hence, we can apply the variation of constants formula \eqref{eq:variationof_const1} and obtain that
\begin{equation*}
U(s+h,s)x-\ee^{hA(s+h/2)} x = \int_s^{s+h} U(s+h,r)B(r) \ee^{(r-s)A(s+h/2)}x \dd r
\end{equation*}
Here the left-hand side is the difference of the exact solution and the solution obtained by the Magnus method after one time step (the so-called local error).

Clearly, by the assumptions on $V$, we have
\begin{equation*}
\|B(r)\| = \|V(r)-V(s+h/2)\| \leq L|r-s-h/2|^{\alpha}\leq Lh^{\alpha}
\end{equation*}
for $r\in (s,s+h)$. Hence,
\begin{align}\label{eq-Magnusbiz1}
\left\| U(s+h,s)x-\ee^{hA(s+h/2)} x \right\| &\leq \int_s^{s+h} K\ee^{\omega(s+h-r)} L h^{\alpha} K\ee^{\omega(r-s)} \|x\| \dd r \\
&= K^2 L \ee^{\omega h} h^{\alpha+1} \|x\|,\notag
\end{align}
proving the so-called consistency estimate we need for the following step.

The convergence order of the method can be established now using the standard telescopic argument. Let $h:=\frac{t-s}{n}$. Then
\begin{equation*}
U(t,s)=\prod_{k=0}^{n-1}U(s+(k+1)h,s+kh)
\end{equation*}
and the Magnus approximation is
\begin{equation*}
W(t,s)=\prod_{k=0}^{n-1}\ee^{hA(s+(2k+1)\frac{h}{2})}.
\end{equation*}
Hence, for any $x\in X$,
\begin{multline*}
U(t,s)x-W(t,s)x=\\
=\sum_{j=0}^{n-1}\prod_{k=j+1}^{n-1}U(s+(k+1)h,s+kh)\l[U(s+(j+1)h,s+jh)-\ee^{hA(s+(2j+1)\frac{h}{2})}\r]\\
\times \prod_{l=0}^{j-1}\ee^{hA(s+(2l+1)\frac{h}{2})}x.
\end{multline*}
From \eqref{eq-Magnusbiz1} we have
\begin{equation*}
\left\| U(s+(j+1)h,s+jh)x-\ee^{hA(s+(2j+1)\frac{h}{2})}x\right\| \leq  K^2 L \ee^{\omega h} h^{\alpha+1} \|x\|,
\end{equation*}
hence, for $n$ sufficiently large, we obtain that
\begin{align*}
\left\| U(t,s)-W(t,s)\right\| &\leq \sum_{j=0}^{n-1} K\ee^{(n-j-1)\omega h}\cdot K^2 L \ee^{\omega h} h^{\alpha+1}\cdot M\ee^{j\omega h}\\& = K^3 L M h^{\alpha+1} n\ee^{n\omega h} =  K^3LM(t-s)^{\alpha+1}\ee^{\omega(t-s)}\cdot\frac{1}{n^{\alpha}},
\end{align*}
since $n=\frac{t-s}{h}$.
\end{proof}

\begin{example}
Motivated by Hochbruck and Lubich \cite[Section 8.]{hochbruck-lubich}, we consider the $d$-dimensional Schr\"{o}dinger equation
\begin{equation*}
\begin{cases}
i\tfrac{\partial\psi}{\partial t} = -\tfrac{1}{2}\Delta \psi + b(x,t)\psi, \qquad x\in \RR^d,\, t>0,\\
\psi(x,0)=\psi_0(x),
\end{cases}
\end{equation*}
where the potential $b$ and the initial function $\psi_0$ are assumed to be $2\pi$-periodic in every spatial coordinate.

Assume further that
\begin{itemize}
\item the function $b$ is real-valued,
\item $b(\cdot,t)\in L^{\infty}(\RR^d)$ for all $t>0$, and
\item the function $t\mapsto b(\cdot,t)\in L^{\infty}(\RR^d)$ is $\alpha$-H\"{o}lder continuous for some $\alpha\in (0,1]$.
\end{itemize}
Then the operator family $A(t)= \tfrac{i}{2}\Delta -ib$ satisfies the conditions of Assumptions \ref{asu-hyp}, hence the Schr\"{o}dinger equation is well-posed. Clearly, Assumptions \ref{ass:magnus_norm} are also satisfied, hence the Magnus method \eqref{eq:magnus_split} has operator norm convergence of order $\alpha$.

Note that Hochbruck and Lubich showed that under additional smoothness conditions on $b$, one can recover the classical second order convergence for smooth initial values. This convergence will be, however, in general only strong, the uniform rate remains first order.
\end{example}

We can use the above result to prove a regularity result on the propagators. Note that the compactness of propagators is important, for example, in case you want to establish a Peano type existence result on semilinear equations.
\begin{corollary}
Assume that the conditions in Assumption \ref{ass:magnus_norm} are satisfied and that there is $r_0\in \RR$ such that $A(r_0)$ generates an immediately compact semigroup. Then all the propagators $U(t,s)$ are compact.
\end{corollary}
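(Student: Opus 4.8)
The plan is to use that, by Theorem~\ref{thm:magnus conv}, the propagator $U(t,s)$ is the limit in the operator norm of the Magnus approximants
\[
W_n(t,s)=\prod_{k=0}^{n-1}\ee^{\frac{t-s}{n}A\left(s+(2k+1)\frac{t-s}{2n}\right)},
\]
together with the classical fact that the set of compact operators is closed in $\calL(X)$ with respect to the operator norm. Hence it is enough to prove that each $W_n(t,s)$ is compact for every $n\in\NN$ and every $t>s$ (for $t=s$ the propagator is the identity, so the statement is understood only for $t>s$); the norm limit $U(t,s)$ is then automatically compact.

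Since $W_n(t,s)$ is a finite product of bounded operators, and a product of bounded operators is compact as soon as one of its factors is, it suffices to show that each factor $\ee^{hA(r)}$, with $h=\frac{t-s}{n}>0$, is compact; equivalently, that every $A(r)$ generates an immediately compact semigroup. To obtain this I would invoke the decomposition $A(t)=A+V(t)$ from Assumption~\ref{ass:magnus_norm}(a). Writing $A=A(r_0)-V(r_0)$, the operator $A$ differs from $A(r_0)$ by the bounded operator $-V(r_0)$; and for arbitrary $r$ the operator $A(r)=A+V(r)$ again differs from $A$ by a bounded operator. Granting that immediate compactness of a strongly continuous semigroup is stable under bounded perturbation, one concludes from the immediate compactness of the semigroup generated by $A(r_0)$ that $A$, and then every $A(r)$, generate immediately compact semigroups, so that each factor $\ee^{hA(r)}$ is compact.

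The one point that genuinely needs an argument---and which I expect to be the main obstacle---is precisely this stability of immediate compactness under bounded perturbation. I would settle it by recalling that a $C_0$-semigroup is immediately compact if and only if it is immediately norm continuous and its generator has compact resolvent (see \cite{Engel-Nagel}). Immediate norm continuity is preserved under bounded perturbation, while the resolvent identity
\[
R(\lambda,A+B)=R(\lambda,A)+R(\lambda,A+B)\,B\,R(\lambda,A)
\]
shows that if $R(\lambda,A)$ is compact and $B\in\calL(X)$, then $R(\lambda,A+B)$ is compact as well. Combining these two facts yields that $A+B$ generates an immediately compact semigroup whenever $A$ does, which is exactly what the reduction above requires; the remaining steps (compactness of the product and passage to the norm limit) are then immediate.
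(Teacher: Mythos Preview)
Your proposal is correct and follows essentially the same route as the paper's proof: the paper simply invokes ``the bounded perturbation theorem'' to conclude that every $\ee^{tA(r)}$ is immediately compact, and then uses the operator-norm convergence of the Magnus approximants together with closedness of the compact operators to deduce compactness of $U(t,s)$. You have merely unpacked the bounded-perturbation step via the characterization ``immediately compact $\Leftrightarrow$ immediately norm continuous plus compact resolvent'', which is exactly the standard way to prove that perturbation result.
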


\begin{proof}
We know by the bounded perturbation theorem that all the semigroups $\ee^{tA(r)}$ are immediately compact for $r\in \RR$. Since the Magnus method \eqref{eq:magnus_split} converges in the operator norm, it follows that the operators $U(t,s)$ are compact.
\end{proof}

\section{Generalization to strong convergence}

If we weaken the assumptions, the previous result can be generalized in a straightforward way. We weaken the assumptions so that we only assume $A(t)=A+V(t)$ where $V(t)\in \calL(D(A),X)$ is allowed to be an unbounded operator.

\begin{assumptions}\label{ass:magnus_strong}
Assume that
\begin{enumerate}[(a)]
\item the non-autonomous Cauchy problem is well-posed and solved by the evolution family $U(t,s)$, the operator $A(r)$ is the generator of a strongly continuous semigroup for all $r\in\RR$, $D:=D(A(r))$, all the graph norms are equivalent with the same constants, and there is $K\geq 1$, $\omega\in \RR$ such that
    \begin{equation*}
    \|U(t,s)\| \leq K \ee^{\omega(t-s)}, \quad   \|\ee^{tA(r)}\| \leq K\ee^{\omega t}
    \end{equation*}
    for all $t,r\in\RR$,
\item (\emph{Stability}) the Magnus method \eqref{eq:magnus_split} is stable, i.e., there are constants $M\geq 1$, $\omega\in \RR$ such that for all $n\in\NN$
\begin{equation*}
\left\| \prod_{j=0}^k \ee^{\tfrac{t-s}{n}A\left(s+\tfrac{(2j+1)(t-s)}{2n}\right)} \right\| \leq M \ee^{\tfrac{(k+1)\omega(t-s)}{n}}
\end{equation*}
for all $k=0,1\ldots,n-1$, and
\item (\emph{Local H\"{o}lder continuity}) there exists $\alpha\in (0,1]$ and for all $K>0$ there is $L=L(K)>0$ such that
\begin{equation*}
\left\| A(t)-A(s)\right\|_D \leq L|t-s|^{\alpha}
\end{equation*}
for all $t,s\in (-K,K)$, where $\|\cdot \|_D$ denotes (one of) the graph norm(s) on $D.$
\end{enumerate}
\end{assumptions}

\begin{proposition}
Assume that  the conditions in Assumption \ref{ass:magnus_strong} are satisfied. Then the Magnus method defined by \eqref{eq:magnus_split} converges for all $x\in X$ and has for smooth initial values $x\in D$ convergence order $\alpha\in (0,1]$.
\end{proposition}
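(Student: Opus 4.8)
The plan is to follow the proof of Theorem~\ref{thm:magnus conv} almost verbatim, with the single change that the local (consistency) error is now measured against the graph norm $\|\cdot\|_D$; the two assertions are then separated, the convergence \emph{rate} being obtained on the smooth subspace $D$, and plain strong convergence on all of $X$ by density. Throughout write $h:=\tfrac{t-s}{n}$ and abbreviate $A_k:=A\bigl(s+(2k+1)\tfrac h2\bigr)$.

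\emph{Consistency in graph norm.} As before, $A(t)=A(s+\tfrac h2)+B(t)$ with $B(t)=V(t)-V(s+\tfrac h2)=A(t)-A(s+\tfrac h2)$, which now lies in $\calL(D,X)$. Theorem~\ref{thm:var_const} gives, for $x\in D$,
\[
U(s+h,s)x-\ee^{hA(s+h/2)}x=\int_s^{s+h}U(s+h,r)\,B(r)\,\ee^{(r-s)A(s+h/2)}x\,\dd r .
\]
Here $\|B(r)\|_{\calL(D,X)}\le Lh^\alpha$ by (c), and since $A(s+\tfrac h2)$ commutes with its own semigroup and all graph norms are equivalent with the same constants, $\|\ee^{(r-s)A(s+h/2)}x\|_D\le C_0K\ee^{\omega(r-s)}\|x\|_D$. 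Together with $\|U(s+h,r)\|\le K\ee^{\omega(s+h-r)}$ this reproduces \eqref{eq-Magnusbiz1} in the form
\[
\bigl\|U(s+h,s)x-\ee^{hA(s+h/2)}x\bigr\|\le C_1 h^{\alpha+1}\|x\|_D,\qquad x\in D.
\]

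\emph{Telescoping.} I would use the telescopic identity that keeps the \emph{exact} propagator on the right,
\[
U(t,s)x-W(t,s)x=\sum_{j=0}^{n-1}\Bigl(\prod_{k=j+1}^{n-1}\ee^{hA_k}\Bigr)\bigl[U(s+(j+1)h,s+jh)-\ee^{hA_j}\bigr]U(s+jh,s)x,
\]
so that the bracket is controlled by the consistency bound applied to the smooth vector $U(s+jh,s)x\in D$. The left-hand products are bounded by the stability assumption (b) (read on the subinterval $[s+(j+1)h,t]$, which carries exactly the Magnus nodes $A_{j+1},\dots,A_{n-1}$), and summing $n$ contributions each of size $h^{\alpha+1}$ gives the claimed order $h^\alpha=(t-s)^\alpha n^{-\alpha}$ — \emph{provided} $\sup_{0\le j\le n}\|U(s+jh,s)x\|_D\le C\|x\|_D$.

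\emph{Main obstacle.} Establishing this uniform graph-norm bound on the exact solution is the crux. It is not a consequence of the $\calL(X)$-stability in (b); rather it is precisely the statement that the solving evolution family leaves the smooth space $D$ invariant and is bounded there, i.e.\ the energy (graph-norm) stability accompanying well-posedness of hyperbolic problems with constant domain, and it plays here the role that the a~priori bound $\|U(t,s)\|\le K\ee^{\omega(t-s)}$ played in Theorem~\ref{thm:magnus conv}. Shifting the estimate instead onto the Magnus products $\prod_{l<j}\ee^{hA_l}x$ looks unattractive: the $h^\alpha$-sized jumps $A_l-A_{l-1}$ feed a discrete Gronwall inequality and produce a factor $\exp(Cn^{1-\alpha})$, harmless only when $\alpha=1$.

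\emph{Convergence for all $x$.} Finally, the approximations $W(t,s)=\prod_{k=0}^{n-1}\ee^{hA_k}$ are uniformly bounded in $n$ by (b), and $\|U(t,s)\|\le K\ee^{\omega(t-s)}$; since the rate just obtained shows $W(t,s)x\to U(t,s)x$ on the dense subspace $D$, a Banach--Steinhaus / Trotter--Kato argument extends the convergence to every $x\in X$, now without a rate. This yields both assertions of the proposition.
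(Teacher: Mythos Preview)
The paper omits the proof entirely, stating only that it ``goes on the same line as that of Theorem~\ref{thm:magnus conv}''. Your proposal does precisely this, so at the level of strategy you match the paper. Your one deliberate change is to reverse the telescoping --- putting the Magnus products on the left and the exact propagator $U$ on the right --- because the consistency estimate is now relative to $\|x\|_D$ and you need the vector entering the bracket to be controlled in graph norm.

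You correctly isolate the crux that the paper's one-line dismissal hides: whichever telescoping is used, one needs a uniform $D$-norm bound on the rightmost block of factors, and Assumption~\ref{ass:magnus_strong} states stability only in $\calL(X)$. With the paper's original ordering (Magnus products on the right) this would require $\bigl\|\prod_{l<j}\ee^{hA_l}\bigr\|_{\calL(D)}\le C$, which, as you observe, is not forced by the assumptions and for $\alpha<1$ a naive Gronwall argument fails. With your ordering it requires $\|U(s+jh,s)\|_{\calL(D)}\le C$, which is the standard companion to hyperbolic well-posedness with constant domain (and indeed holds in the example following the proposition), but is still not explicitly among the hypotheses. So the gap you flag is real, but it is the paper's gap rather than yours; your reversal of the telescoping is the more honest way to expose exactly what extra input is needed.
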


The proof goes on the same line as that of Theorem \ref{thm:magnus conv} and is therefore omitted.

\begin{example}
Consider the Schr\"{o}dinger equation
\begin{equation*}
\begin{cases}
i\tfrac{\partial\psi}{\partial t} = -A(t)\psi, \qquad x\in \RR^d,\, t>0,\\
\psi(x,0)=\psi_0(x),
\end{cases}
\end{equation*}
where, formally,
\begin{equation*}
A(t):=\sum_{j,k=1}^{d}\partial_j a_{j,k}(\cdot,t)\partial_k
\end{equation*}
is the differential operator corresponding to the real-valued coefficients $a_{j,k}$. See Engel and Nagel \cite[Section VI.5.c]{Engel-Nagel} for further details on such operators. We assume here that the coefficients $a_{j,k}$ satisfy
\begin{equation*}
a_{j,k}(\cdot, t)=a_{k,j}(\cdot,t)\in W^{1,\infty}(\RR^d),
\end{equation*}
and
\begin{equation*}
\sum_{j,k=1}^d a_{j,k}(x,t)y_j y_k \geq c|y|^2
\end{equation*}
for a uniform constant $c>0$ and for all $x,y\in \RR^d$. Then $D(A(r))=H^2(\RR^d)$ with graph norms equivalent to the Sobolev norms. Differential operators of this type usually appear as the linearization of non-autonomous Schr\"{o}dinger equations.

Assume further that there is $\alpha\in (0,1]$ such that the maps $t\mapsto a_{j,k}(\cdot,t)$ are $\alpha$-H\"{o}lder continuous. Since all the operators $A(r)$ are selfadjoint, $iA(r)$ generates a unitary group, hence the hyperbolicity Assumption \ref{asu-hyp} are satisfied.

Hence, for all $\psi_0\in H^2(\RR^d)$ the Magnus method \eqref{eq:magnus_split} converges and its order is at least $\alpha$.
\end{example}

\section*{Acknowledgments}
A.~B\'atkai was supported by the Alexander von Humboldt-Stiftung and by the OTKA grant Nr. K81403. E.~Sikolya was supported by the Bolyai Grant of the Hungarian Academy of Sciences. Both authors were supported by the European Union and co-financed by the European Social Fund (grant agreement no. TAMOP
4.2.1./B-09/1/KMR-2010-0003).

\end{document}